\newcounter{num}[section]
\newenvironment{theorem}
{\refstepcounter{num}%
\bigskip\noindent\nopagebreak[4]{\bf Theorem~\arabic{section}.\arabic{num}. }\it}
\newenvironment{corollary}
{\refstepcounter{num}%
\bigskip\noindent\nopagebreak[4]{\bf Corollary~\arabic{section}.\arabic{num}. }\it}
\newenvironment{lemma}
{\refstepcounter{num}%
\bigskip\noindent\nopagebreak[4]{\bf Lemma~\arabic{section}.\arabic{num}. }\it}
\newenvironment{example}
{\refstepcounter{num}%
\bigskip\noindent\nopagebreak[4]{\bf Example~\arabic{section}.\arabic{num}. }}
\newcommand{\N}{{\mathbb{N}}}
\newcommand{\LL}{{\mathcal{L}}}
\newcommand{\Ss}{{\mathbf{S}}}
\newcommand{\V}{{\mathrm{V}}}
\newcommand{\pr}{{\prime}}
\newcommand{\A}{{\mathcal{A}}}
\newcommand{\Bcal}{{\mathcal{B}}}
\newcommand{\Ecal}{{\mathcal{E}}}
\newcommand{\Gcal}{{\mathcal{G}}}
\newcommand{\Mcal}{{\mathcal{M}}}
\newcommand{\Pcal}{{\mathcal{P}}}
\renewcommand{\a}{{\mathbf{a}}}
\renewcommand{\c}{{\mathbf{c}}}
\newcommand{\Kbf}{{\mathbf{K}}}
\newcommand{\Nbf}{{\mathbf{N}}}
\newcommand{\PG}{{\Pi\Gcal}}
\begin{document}

\title{On equations over direct powers of algebraic structures}
\author{Artem N. Shevlyakov}
\maketitle
\abstract{We study systems of equations over graphs, posets and matroids. We give the criteria, when a direct power of such algebraic structures is equationally Noetherian. Moreover we prove that any direct power of a finite algebraic structure is weakly equationally Noetherian.}

\section{Introduction}

Let $\Kbf$ be an arbitrary class of mathematical objects. One of the main problem of mathematics is to describe ``simple'' and ``hard'' objects in $\Kbf$. One can do it in different ways using various technic of algebra, geometry, calculus etc. In the current paper we make an attempt to classify  ``simple'' and ``hard'' algebraic structures by universal algebraic geometry (UAG). 

Following~\cite{DMR1}, UAG is a discipline of model theory, and it deals with equations over arbitrary algebraic structures. There are many notions of UAG which allow us to separate algebraic structures with ``simple'' and ``hard'' equational properties. The main feature here is the equationally Noetherian property. Recall that an algebraic structure $\A$ is equationally Noetherian if any system of equations $\Ss$ is equivalent over $\A$ to a finite subsystem. Roughly speaking, if an algebraic structure $\A$ is equationally Noetherian, then its equational properties are said to be ``simple''. Otherwise, we assume that $\A$ has a complicated equational theory. 

Indeed, the Noetherian property is a central notion of UAG, and papers~\cite{DMR1,DMR2,DMR3} contain the series of results which establish nice properties of equationally Noetherian algebraic structures. However, for {\it finite} algebraic structures the Noetherian property gives the trivial partition into ``simple'' and ``hard'' classes, since all finite algebraic structures are equationally Noetherian.

Thus, we have to propose an alternative approach in the division of finite algebraic structures into the classes with ``simple'' and ``hard'' equational properties. Our approach satisfies the following:
\begin{enumerate}
\item we deal with lattices of algebraic sets over a given algebraic structures (a set $Y$ is algebraic over an algebraic structure $\A$ if $Y$ is the solution set of an appropriate system of equations);
\item we use the common operations of UAG (direct products, substructures, ultra-products etc.); 
\item the partition into ``simple'' and ``hard'' algebraic structures is implemented by a list of first-order formulas $\Phi$ such that 
\[
\A\mbox{ is ``simple'' }\Leftrightarrow \A\mbox{ satisfies }\Phi.
\]
In other words, the ``simple'' class is axiomatizable by the formulas $\Phi$. 
\end{enumerate}

Namely, we offer to consider infinite direct powers $\Pi \A$ of an algebraic structure $\A$ and study of Diophantine equations over $\Pi\A$ instead of Diophantine equations over $\A$ (an equation $E(X)$ is said to be Diophantine over an algebraic structure $\Bcal$ if $E(X)$ may contain occurrences of any element from $\Bcal$). The decision rule in our approach is the following:
\begin{equation}
\A\mbox{ is ``simple'' }\Leftrightarrow \mbox{all direct powers of }\A\mbox{ are equationally Noetherian };
\label{eq:our_decision_rule}
\end{equation}
otherwise, an algebraic structure $\A$ is said to be ``hard''.   

Some results of the type~(\ref{eq:our_decision_rule}) were obtained in~\cite{shevl_shah}, where we describe all groups, rings and monoids satisfying~(\ref{eq:our_decision_rule}). For example, a group (ring) satisfies~(\ref{eq:our_decision_rule}) iff it is abelian (respectively, with zero multiplication). 

The current paper continues the study of~\cite{shevl_shah}, and in Sections~\ref{sec:graphs}--\ref{sec:matroids} we consider equations over the important classes of relational algebraic structures: graphs, partial orders and matroids. For each of these classes we describe algebraic structures that satisfies~(\ref{eq:our_decision_rule}). 

However, the most complicated and nontrivial part of our paper is Section~\ref{sec:general_results}. It contains the series of general results that hold for any direct power of any finite algebraic structure $\A$. In particular, we prove that any infinite system of equations $\Ss$ over $\Pi\A$ is equivalent to a finite system $\Ss^\pr$ (here we do not claim $\Ss^\pr\subseteq\Ss$). Thus, we prove that any direct power of a finite algebraic structure is weakly equationally Noetherian (see the definition in Section~\ref{sec:basics}).

\section{Basic definitions}
\label{sec:basics}

Following~\cite{DMR1,DMR2,DMR3}, we give the main definitions of universal algebraic geometry.

Let $\LL$ be a language and $\A$ be an algebraic structure of the language $\LL$ ($\LL$-structure). In the current paper we consider languages of the following types: $\LL_g=\{E^{(2)}\}$ (graph language), 
$\LL_p=\{\leq^{(2)}\}$ (partial order language),$\LL_m=\{P_1^{(1)},P_2^{(2)},\ldots\}$ (matroid language). An {\it equation over  $\LL$ ($\LL$-equation)} is an atomic formula over $\LL$.
The examples of equations in various languages are the following: $E(x,y)$, $E(x,x)$, $x=y$ (language $\LL_g$); $x\leq y$, $x\leq x$, $x=y$ (language $\LL_p$) $P_1(x)$, $P_2(x,y)$ $x=y$ (language $\LL_m$).

A {\it system of $\LL$-equations} ({\it $\LL$-system} for shortness) is an arbitrary set of $\LL$-equations. Notice that we consider only systems in a finite set of variables $X=\{x_1,x_2,\ldots,x_n\}$. The set of all solutions of $\Ss$ in an $\LL$-structure $\A$ is denoted by $\V_\A(\Ss)\subseteq \A^n$. A set $Y\subseteq \A^n$ is said to be an {\it algebraic over $\A$} if there exists an $\LL$-system $\Ss$ with $Y=\V_\A(\Ss)$. If the solution set of an $\LL$-system $\Ss$ is empty, $\Ss$ is said to be {\it inconsistent}. Two $\LL$-systems $\Ss_1,\Ss_2$ are called {\it equivalent over an $\LL$-structure $\A$} if $\V_\A(\Ss_1)=\V_\A(\Ss_2)$. This equivalence relation is denoted by $\Ss_1\sim\Ss_2$.

An $\LL$-structure $\A$ is {\it $\LL$-equationally Noetherian} if any infinite $\LL$-system $\Ss$ is equivalent over $\A$ to a finite subsystem $\Ss^\pr\subseteq \Ss$. The class of equationally Noetherian $\LL$-structures is denoted by $\Nbf$.

In~\cite{DMR3} it was introduced generalizations of the Noetherian property.  An $\LL$-structure $\A$ is {\it weakly $\LL$-equationally Noetherian} if any infinite $\LL$-system $\Ss$ is equivalent over $\A$ to a finite system $\Ss^\pr$ (here we do not claim $\Ss^\pr\subseteq\Ss$). The class of weakly equationally Noetherian $\LL$-structures is denoted by $\Nbf^\pr$. Obviously, $\Nbf\subseteq \Nbf^\pr$.


Let $\A$ be an $\LL$-structure. By $\LL(\A)$ we denote the language $\LL\cup\{a\mid a\in \A\}$ extended by new constant symbols which correspond to elements of $\A$. The language extension allows us to use constants in equations. The examples of equations in extended languages are the following (below $\Gcal$, $\Mcal$ are graph and matroid respectively): $E(x,a)$ (language $\LL_g(\Gcal)$ and $a\in \Gcal$); $P_2(a,x)$, $P_3(x,b,c)$, $P_4(a,x,y,b)$ (language $\LL_m(\Mcal)$ and $a,b,c\in \Mcal$). Obviously, the class of $\LL(\A)$-equations is wider than the class of $\LL$-equations, so an $\LL$-equationally Noetherian $\LL$-algebra may lose this property in the language $\LL(\A)$. 

Let $\A$ be an $\LL$-structure. An element of a {direct power} $\Pi \A=\prod_{i\in I}\A$ is denoted by  a sequence in square brackets $[a_i\mid i\in I]$. Functions and relations over $\Pi\A$ have the coordinate-wise definition. For example, any relation $R^{m}\in\LL$ is defined on $\Pi\A$ as follows:
\[
R([a^{(1)}_i\mid i\in I],[a^{(2)}_i\mid i\in I],\ldots,[a^{(m)}_i\mid i\in I])\Leftrightarrow 
R(a^{(1)}_i,a^{(2)}_i,\ldots,a^{(m)}_i) \mbox{ for each }i\in I.
\]
The map $\pi_k\colon\Pi\A\to \A$ is called a {\it projection onto the $i$-th coordinate} if $\pi_k([a_i\mid i\in I])=a_k$.

Let $E(X)$ be an $\LL(\Pi\A)$-equation over the direct power $\Pi\A$. We may rewrite $E(X)$ in the form $E(X,\overrightarrow{\mathbf{C}})$, where $\overrightarrow{\mathbf{C}}$ is an array of constants occurring in the equation $E(X)$. One can introduce the {\it projection of an equation} onto the $i$-th coordinate as follows:
\[
\pi_i(E(X))=\pi_i(E(X,\overrightarrow{\mathbf{C}}))=E(X,\pi_i(\overrightarrow{\mathbf{C}})),
\] 
where $\pi_i(\overrightarrow{\mathbf{C}}))$ is an array of the $i$-th coordinates of the elements from $\overrightarrow{\mathbf{C}}$. For example, the $\LL_g(\Pi\Gcal)$-equation $E(x,[a_1,a_2,a_3,\ldots])$ has the following projections
\begin{eqnarray*}
E(x,a_1),\\
E(x,a_2),\\
E(x,a_3),\\
\ldots
\end{eqnarray*}
Similarly, the matroid equation $P_4(x,[a_1,a_2,a_3,\ldots],y,[b_1,b_2,b_3,\ldots])$ has the projections 
\begin{eqnarray*}
P_4(x,a_1,y,b_1),\\
P_4(x,a_2,y,b_2),\\
P_4(x,a_3,y,b_3),\\
\ldots
\end{eqnarray*}

Let us take an $\LL(\Pi\A)$-system $\Ss=\{E_j(X)\mid j\in J\}$. The $i$-th projection of $\Ss$ is the $\LL(\A)$-system $\pi_i(\Ss)=\{\pi_i(E_j(X))\mid j\in J\}$. The projections of an $\LL(\Pi\A)$-system $\Ss$ allow to describe the solution set of $\Ss$ by
\begin{equation}
\V_{\Pi\A}(\Ss)=\{[P_i\mid i\in I]\mid P_i\in \V_{\A}(\pi_i(\Ss))\}.
\label{eq:solution_set_via_projections}
\end{equation}
In particular, if one of the projections $\pi_i(\Ss)$ is inconsistent, so is $\Ss$.

The following statement immediately follows form~(\ref{eq:solution_set_via_projections}).

\begin{lemma}
Let $\Ss=\{E_j(X)\mid j\in J\}$ be an $\LL(\Pi\A)$-system over $\Pi\A$. The system $\Ss$ is consistent iff so are all projections $\pi_i(\Ss)$. Moreover, if $\A$ is $\LL$-equationally Noetherian, then an inconsistent $\LL(\Pi\A)$-system $\Ss$ is equivalent to a finite subsystem.
\label{l:inconsistent_systems}
\end{lemma}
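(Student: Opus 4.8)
The plan is to read both assertions directly off the solution-set description~(\ref{eq:solution_set_via_projections}), so no new machinery is needed. For the first claim, recall that $\Ss$ is consistent precisely when $\V_{\Pi\A}(\Ss)\neq\emptyset$. If every projection $\pi_i(\Ss)$ is consistent, then each $\V_\A(\pi_i(\Ss))$ is nonempty, so I may choose $P_i\in\V_\A(\pi_i(\Ss))$ for every $i\in I$, and the resulting tuple $[P_i\mid i\in I]$ lies in $\V_{\Pi\A}(\Ss)$ by~(\ref{eq:solution_set_via_projections}); hence $\Ss$ is consistent. Conversely, if some $\pi_k(\Ss)$ is inconsistent, then no element of $\Pi\A$ can have $k$-th coordinate in the empty set $\V_\A(\pi_k(\Ss))$, so~(\ref{eq:solution_set_via_projections}) forces $\V_{\Pi\A}(\Ss)=\emptyset$ and $\Ss$ is inconsistent. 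This settles the ``iff''.

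For the ``moreover'' part, assume $\A$ is $\LL$-equationally Noetherian and $\Ss$ is inconsistent. By the first part there is an index $k\in I$ such that the $\LL(\A)$-system $\pi_k(\Ss)=\{\pi_k(E_j(X))\mid j\in J\}$ is inconsistent. Applying the Noetherian property to this system produces a finite subset $J_0\subseteq J$ with $\{\pi_k(E_j(X))\mid j\in J_0\}$ equivalent over $\A$ to $\pi_k(\Ss)$, and therefore itself inconsistent. Put $\Ss^\pr=\{E_j(X)\mid j\in J_0\}\subseteq\Ss$. Since $\pi_k(\Ss^\pr)=\{\pi_k(E_j(X))\mid j\in J_0\}$ is inconsistent, applying the first part of the lemma to $\Ss^\pr$ gives $\V_{\Pi\A}(\Ss^\pr)=\emptyset=\V_{\Pi\A}(\Ss)$, so $\Ss^\pr\sim\Ss$, as required.

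There is essentially no obstacle here: the whole argument is a matter of unwinding~(\ref{eq:solution_set_via_projections}). The only points worth flagging are the (harmless) use of the axiom of choice to assemble the sequence $[P_i\mid i\in I]$ in the consistency direction, and the observation that inconsistency of a single projection already suffices, so that the Noetherian hypothesis is applied to just one coordinate; both are routine.
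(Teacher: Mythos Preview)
Your proof is correct and follows essentially the same route as the paper's own argument: both parts are read off the description~(\ref{eq:solution_set_via_projections}), and for the ``moreover'' clause you pick an inconsistent projection, apply the Noetherian hypothesis to extract a finite set of indices $J_0$, and lift back to the finite subsystem $\Ss^\pr=\{E_j(X)\mid j\in J_0\}$. The only difference is that you spell out both directions of the ``iff'' explicitly, whereas the paper simply cites~(\ref{eq:solution_set_via_projections}).
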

\begin{proof}
The first assertion directly follows from~(\ref{eq:solution_set_via_projections}). Suppose $\A$ is $\LL$-equationally Noetherian, and $\pi_i(\Ss)$ is inconsistent. Hence, $\pi_i(\Ss)$ is equivalent to its finite inconsistent subsystem $\{\pi_i(E_j(X))\mid j\in J^\pr\}$, $|J^\pr|<\infty$, and the subsystem $\Ss^\pr=\{E_j(X)\mid j\in J^\pr\}$ of $\Ss$ is also inconsistent.
\end{proof}

\section{Graphs}
\label{sec:graphs}

Recall that a {\it graph} is an algebraic structure of the language $\LL_g=\{E^{(2)}\}$ satisfying the following axioms:
\begin{eqnarray*}
\forall x\; \neg E(x,x)\mbox{ (no loops)},\\
\forall x\forall y\; E(x,y)\to E(y,x)\mbox{ (symmetry)}.
\end{eqnarray*} 

\begin{theorem}
A graph $\Pi\Gcal=\prod_{i\in I}\Gcal$ is $\LL_g(\PG)$-equationally Noetherian iff $\Gcal$ satisfies the quasi-identity
\begin{equation}
\forall x_1\forall x_2\forall x_3 \forall x_3\:\left( E(x_1,x_2)\wedge E(x_2,x_3)\wedge E(x_3,x_4)\to E(x_4,x_1) \right).
\label{eq:quasi-identity_N}
\end{equation}
\label{th:N_criterion}
\end{theorem}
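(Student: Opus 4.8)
The plan is to prove both directions by analyzing what the projections $\pi_i(\Ss)$ of an $\LL_g(\PG)$-system can look like, using the characterization~(\ref{eq:solution_set_via_projections}) of solution sets via projections. The key observation is that an $\LL_g(\Gcal)$-equation over $\Gcal$ in variables $X=\{x_1,\dots,x_n\}$ is one of finitely many ``shapes'': $E(x_k,x_\ell)$, $E(x_k,a)$, $E(a,b)$, $x_k=x_\ell$, $x_k=a$, $a=b$ (modulo symmetry of $E$), where $a,b\in\Gcal$. So a system $\Ss$ over $\PG$ is, up to the constants appearing, controlled by finitely much combinatorial data per coordinate. Since $\Gcal$ is finite (here $\PG$ is a power of a \emph{finite} graph, consistent with the paper's running hypothesis; more precisely equational Noetherianity of $\Gcal$ suffices for the easy reductions, but finiteness is what makes the pigeonhole in the hard direction work), there are only finitely many isomorphism types of the data $(\pi_i(\Ss)$ restricted to its ``constant part''$)$, and I would group the index set $I$ into finitely many classes accordingly.

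For the ``if'' direction, assume $\Gcal$ satisfies~(\ref{eq:quasi-identity_N}). I would first dispose of inconsistent systems by Lemma~\ref{l:inconsistent_systems} (a finite graph is $\LL_g$-equationally Noetherian since it is finite). For a consistent system $\Ss$, the quasi-identity~(\ref{eq:quasi-identity_N}) says: in $\Gcal$, any walk of length $4$ closes up, i.e. the ``edge relation'' has a strong local structure — $\Gcal$ is a disjoint union of complete bipartite-like pieces (more precisely, each connected component of $\Gcal$ is complete bipartite, since no odd closed walks and every length-$4$ walk closes forces bipartite components with all cross-edges present). This means that over such $\Gcal$, the solution set of \emph{any} $\LL_g(\Gcal)$-system is already cut out by equations mentioning at most a bounded number of constants and edges — the transitive-closure-type constraints collapse. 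Concretely, I would show: for each coordinate $i$, $\V_\Gcal(\pi_i(\Ss))$ equals $\V_\Gcal$ of a subsystem of $\pi_i(\Ss)$ of size bounded by a constant $C=C(\Gcal,n)$ depending only on $\Gcal$ and $n$, and — crucially — one can choose these bounded subsystems \emph{coherently} across all $i$ so that they are the projections of a single finite subsystem $\Ss'\subseteq\Ss$. This coherence is where one uses that there are only finitely many ``constant types'' of projected equations: pick, for each of the finitely many types, a bounded witnessing set, then take the union of the corresponding equations of $\Ss$.

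For the ``only if'' direction, suppose $\Gcal$ fails~(\ref{eq:quasi-identity_N}): there are $g_1,g_2,g_3,g_4\in\Gcal$ with $E(g_1,g_2)$, $E(g_2,g_3)$, $E(g_3,g_4)$ but $\neg E(g_4,g_1)$. I would use this to build, over $\PG$ with $I=\N$, an infinite system $\Ss=\{E_m\mid m\in\N\}$ in one or two variables that is \emph{not} equivalent to any finite subsystem: arrange coordinate $i$ to ``see'' the forbidden configuration at scale $i$, so that omitting the $m$-th equation re-admits a solution differing from the true solution set precisely in coordinate $m$. The standard trick (as in the groups/rings cases of~\cite{shevl_shah}) is to encode, in coordinate $i$, a constraint forcing a variable through a length-$i$ walk pattern built from $g_1,g_2,g_3,g_4$; each finite truncation fails to enforce the full pattern in the later coordinates. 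I would then verify via~(\ref{eq:solution_set_via_projections}) that $\V_{\PG}(\Ss)\subsetneq\V_{\PG}(\Ss')$ for every finite $\Ss'\subseteq\Ss$.

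The main obstacle I expect is the coherence step in the ``if'' direction: bounding each individual projection is routine once the structure of $\Gcal$ under~(\ref{eq:quasi-identity_N}) is understood, but assembling the per-coordinate bounded subsystems into one finite subsystem of the \emph{original} system $\Ss$ (not merely a finite equivalent system, which would only give weak Noetherianity) requires a careful pigeonhole on the finitely many constant-types, and one must check that the bound $C$ genuinely does not depend on $|I|$. Making the classification of $\Gcal$'s edge structure under~(\ref{eq:quasi-identity_N}) precise — and checking it really does collapse all iterated-adjacency constraints — is the other place where care is needed.
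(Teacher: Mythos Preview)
Your ``only if'' direction is essentially the paper's approach: take witnesses $a_1,a_2,a_3,a_4$ to the failure of~(\ref{eq:quasi-identity_N}) and build a one-variable system over $\PG$ (with $I=\N$) whose first $n$ equations admit a solution that the $(n+1)$-th kills. The paper writes this down explicitly (constants shift between $a_2$ and $a_4$ along the diagonal; the witnessing point uses $a_3$ and $a_1$), but your outline would lead there.

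Your ``if'' direction, however, has a genuine gap and an unnecessary hypothesis. First, the theorem does \emph{not} assume $\Gcal$ is finite; your pigeonhole-on-constant-types plan leans on finiteness and would not prove the statement as written. Second, and more importantly, you are missing the structural consequence of~(\ref{eq:quasi-identity_N}) that makes the whole coherence problem disappear. The quasi-identity says exactly that for any two vertices $c,c'\in\Gcal$, the neighbourhoods $N(c)=\{x:E(x,c)\}$ and $N(c')$ are either equal or disjoint: if $b_2\in N(c)\cap N(c')$ and $b_1\in N(c)$, then the path $c',b_2,c,b_1$ forces $E(b_1,c')$, i.e.\ $b_1\in N(c')$. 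Once you have this dichotomy, the paper's argument is immediate. Decompose $\Ss$ as $\Ss_0\cup\bigcup_j\Ss_j$ where $\Ss_j=\{E(x_j,\c_k):k\in K_j\}$ and $\Ss_0$ collects the finitely many remaining shapes. For each $j$: either at every coordinate $i$ all the $\pi_i(\c_k)$ have the same neighbourhood, so $\Ss_j$ is equivalent to any single one of its equations; or at some coordinate $i$ two of them have disjoint neighbourhoods, so $\pi_i(\Ss_j)$ is inconsistent and $\Ss_j$ is equivalent to the corresponding pair $\{E(x_j,\c_{k_1}),E(x_j,\c_{k_2})\}$. Either way $\Ss_j$ is equivalent to at most two of its own equations, with no need for any per-coordinate bookkeeping, no bound $C(\Gcal,n)$, and no finiteness of $\Gcal$. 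The ``coherence obstacle'' you flag simply does not arise.
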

\begin{proof}
Let us prove the ``if'' part of the statement.

Let $\Ss$ be an $\LL_g(\Pi\Gamma)$-system over $\PG$ in variables $X=\{x_1,\ldots,x_n\}$. One can rewrite $\Ss$ as a finite union of systems
\[
\Ss=\bigcup_{j=1}^n\Ss_{j}\bigcup \Ss_{0},
\]
where $\Ss_j=\{E(x_j,\c_k)\mid k\in K_j\}$ and $\Ss_{0}$ is the system of equations of the following types: $E(x_i,x_j)$, $x_i=x_j$, $x_i=\c_j$. Obviously, the system $\Ss_{0}$ is equivalent to a finite subsystem. Hence, it is sufficient to prove that each system $\Ss_j$ in one variable $x_j$ is equivalent to its finite subsystem.

Let us write the coordinate-wise versions of the system $\Ss_j$:
\[
\pi_i(\Ss_j)=\{E(x_j,\pi_i(\c_{k}))\mid k\in K_j\},\;  i\in I,
\] 
where $\pi_i(\c_k)$ is the $i$-th coordinate of an element $\c_k$.

If for each $i$ the equations $\{E(x_j,\pi_i(\c_{k}))\mid k\in K_j\}$ have the same solution sets, then $\Ss_j$ is equivalent to a single equation $E(x_j,\c_k)\in\Ss_j$ for arbitrary $k\in K_j$. Otherwise, there exists an index $i$ such that
\begin{equation}
Y_1=\V_\Gcal(E(x_j,\pi_i(\c_{k_1})))\neq\V_\Gcal(E(x_j,\pi_i(\c_{k_2})))=Y_2
\label{eq:313231}
\end{equation}
for some $k_1,k_2\in K_j$.

If $Y_1\cap Y_2=\emptyset$, then $\Ss_j$ is inconsistent and it is obviously equivalent to the subsystem $\{E(x_j,\c_{k_1}),E(x_j,\c_{k_2})\}$. Thus, we may assume $Y_1\nsubseteq Y_2$ and one can take elements  $b_1,b_2\in\Gcal$ such that $b_1\in Y_1\setminus Y_2$, $b_2\in Y_1\cap Y_2$, i.e. $E(b_1,\pi_i(\c_{k_1}))$, $E(b_2,\pi_i(\c_{k_1}))$ and $E(b_2,\pi_i(\c_{k_2}))$. 

Since the quasi-identity~(\ref{eq:quasi-identity_N}) is true in $\PG$, we have $E(b_1,\pi_i(\c_{k_2})$ that contradicts the choice of the element $b_1$.

\medskip

Let us prove the ``only if'' part of the statement. Assume the quasi-identity~(\ref{eq:quasi-identity_N}) does not hold in $\Gcal$, i.e. there exists elements $a_1,a_2,a_3,a_4$ with $E(a_1,a_2)$, $E(a_2,a_3)$, $E(a_3,a_4)$, $\neg E(a_4,a_1)$. Consider the $\LL_g(\PG)$-system $\Ss$ of the following equations:
\begin{eqnarray*}
E(x,[a_2,a_2,a_2\ldots]),\\
E(x,[a_4,a_2,a_2\ldots]),\\
E(x,[a_4,a_4,a_2\ldots]),\\
\ldots
\end{eqnarray*}  
Let $\Ss_n$ be the subsystem of $\Ss$ formed by the first $n$ equations of $\Ss$. 

The point $\a=[\underbrace{a_3,a_3,\ldots,a_3}_{\mbox{$n-1$ times}},a_1,a_1,\ldots]$ satisfies $\Ss_n$ but $\a$ does not satisfy the $(n+1)$-th equation of $\Ss$. Thus, $\Ss_n$ is not equivalent to $\Ss$ for any $n$, and $\PG$ is not $\LL_g(\PG)$-equationally Noetherian.
\end{proof}

\begin{corollary}
If a graph $\Gcal$ contains a triangle (i.e. there exist vertices $x_1,x_2,x_3\in\Gcal$ with $E(x_1,x_2)$, $E(x_2,x_3)$, $E(x_3,x_1)$) then $\PG$ is not $\LL_g(\PG)$-equationally Noetherian. 
\label{cor:triangle_free}
\end{corollary}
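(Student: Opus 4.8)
The plan is to derive Corollary~\ref{cor:triangle_free} directly from Theorem~\ref{th:N_criterion} by showing that a graph containing a triangle cannot satisfy the quasi-identity~(\ref{eq:quasi-identity_N}). First I would recall what the quasi-identity asserts: for all $x_1,x_2,x_3,x_4$, if $E(x_1,x_2)$, $E(x_2,x_3)$, $E(x_3,x_4)$ all hold, then $E(x_4,x_1)$ holds. So to refute it I need a witnessing $4$-tuple where the three hypotheses hold but the conclusion fails.

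Suppose $\Gcal$ contains a triangle on vertices $x_1,x_2,x_3$, so $E(x_1,x_2)$, $E(x_2,x_3)$, $E(x_3,x_1)$, and hence by symmetry all six ordered pairs among these three are edges. The natural choice is to instantiate the quasi-identity at $(x_1,x_2,x_3,x_1)$, i.e.\ set $x_4:=x_1$. Then $E(x_1,x_2)$ and $E(x_2,x_3)$ hold by assumption, and $E(x_3,x_4)=E(x_3,x_1)$ holds as the third triangle edge; but the conclusion $E(x_4,x_1)=E(x_1,x_1)$ is a loop, which is forbidden by the loop-free axiom $\forall x\,\neg E(x,x)$ that every graph satisfies. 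Hence the quasi-identity fails in $\Gcal$, and Theorem~\ref{th:N_criterion} immediately gives that $\PG$ is not $\LL_g(\PG)$-equationally Noetherian.

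There is really no serious obstacle here; the only point requiring a moment's care is making sure the chosen instantiation is legitimate, namely that the paper's convention allows the four universally quantified variables in~(\ref{eq:quasi-identity_N}) to be assigned non-distinct values (they can, since it is a first-order universal statement), and that the failure of the conclusion genuinely follows from the no-loops axiom rather than needing the triangle hypothesis again. One could alternatively run the argument ``by hand'' mimicking the ``only if'' construction in the proof of Theorem~\ref{th:N_criterion}, building the explicit infinite system with constants drawn from the triangle, but invoking the theorem is cleaner and shorter, so that is the route I would take.
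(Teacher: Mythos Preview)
Your proof is correct and matches the paper's own argument exactly: instantiate the quasi-identity at the triangle with $x_4:=x_1$, so that the conclusion $E(x_1,x_1)$ contradicts the no-loops axiom, and then invoke Theorem~\ref{th:N_criterion}. The paper states this in one sentence (``the condition of Theorem~\ref{th:N_criterion} fails for such graphs, since there are not loops in $\Gcal$''), while you spell out the instantiation explicitly, but the content is identical.
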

\begin{proof}
Obviously, the condition of Theorem~\ref{th:N_criterion} fails for such graphs, since there are not loops in $\Gcal$.
\end{proof}

Let $\Kbf=\{\Gcal\mid \PG\in\Nbf\}$ be the set of all graphs with equationally Noetherian direct powers. Theorem~\ref{th:N_criterion} gives that the class $\Kbf$ is axiomatizable. The class $\Kbf$ may be also described by forbidden graphs and distance functions. 

\begin{corollary}
A graph $\PG$ is $\LL_g(\PG)$-equationally Noetherian iff $\Gcal$ is triangular-free and the distance between any pair of vertices $x,y$ is either $\infty$ (if $x,y$ belong to different connected components) or less than $4$. 
\end{corollary}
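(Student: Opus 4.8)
The plan is to read the corollary through Theorem~\ref{th:N_criterion}: since $\PG$ is $\LL_g(\PG)$-equationally Noetherian if and only if $\Gcal$ satisfies the quasi-identity~(\ref{eq:quasi-identity_N}), it suffices to decide whether~(\ref{eq:quasi-identity_N}) is equivalent to the stated combinatorial condition ``$\Gcal$ is triangle-free and any two vertices of one component lie at distance $<4$''. I would first dispatch the necessity (``only if'') direction, which does hold. Triangle-freeness is immediate from Corollary~\ref{cor:triangle_free} in contrapositive form. For the distance bound I would argue by geodesics: if $u,v$ lie in the same component at distance $d$, pick a shortest path $u=p_0,p_1,\ldots,p_d=v$; should $d\ge 3$, the initial segment $p_0,p_1,p_2,p_3$ is a walk of length $3$, so~(\ref{eq:quasi-identity_N}) forces $E(p_3,p_0)$ and hence $\mathrm{dist}(p_0,p_3)=1$, contradicting the geodesic property $\mathrm{dist}(p_0,p_3)=3$. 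Thus $d\le 2$, in particular $d<4$. Note this already shows the stated bound $<4$ is not sharp: the quasi-identity in fact forces in-component diameter $\le 2$.

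The main obstacle is the sufficiency (``if'') direction, and here I expect the proof to break down rather than to go through. The natural attempt is: given a length-$3$ walk $x_1,x_2,x_3,x_4$ with $E(x_1,x_2)\wedge E(x_2,x_3)\wedge E(x_3,x_4)$, deduce $E(x_4,x_1)$. Triangle-freeness only yields $x_1\ne x_4$ (a closed walk of length $3$ would be a triangle), and the hypothesis $\mathrm{dist}(x_1,x_4)<4$ is useless for the conclusion: nothing in ``triangle-free plus bounded distance'' forces the two ends of a length-$3$ walk to be \emph{adjacent}. They may sit at distance $2$, and then $E(x_4,x_1)$ fails while~(\ref{eq:quasi-identity_N}) demands it. So the combinatorial condition is strictly weaker than the quasi-identity, and the claimed equivalence cannot survive.

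Concretely, the worded statement is false, and I would record two counterexamples to its ``if'' direction. The five-cycle $C_5$ (vertices $1,2,3,4,5$ with the cyclic edges) is triangle-free with all in-component distances equal to $1$ or $2$, hence $<4$, yet the walk $1,2,3,4$ along the cycle has non-adjacent ends $1,4$, so~(\ref{eq:quasi-identity_N}) fails and by Theorem~\ref{th:N_criterion} $\Pi C_5\notin\Nbf$. The path $P_4$ with vertices $a,b,c,d$ is triangle-free with diameter $3<4$, but the walk $a,b,c,d$ forces the missing edge $E(d,a)$, so $\Pi P_4\notin\Nbf$ as well; this example exercises precisely the distance-$3$ case the statement tries to permit. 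The honest repair is to replace ``distance $<4$'' by the genuine content of~(\ref{eq:quasi-identity_N}) --- namely that any two vertices joined by a walk of length $3$ are adjacent --- which one verifies (by taking the distance-parity bipartition $A,B$ of a component and applying the quasi-identity to the walks $r,b,a$ and $a,b_0,r,b$) is equivalent to requiring that every connected component of $\Gcal$ be complete bipartite. The crux of the whole matter is thus that the distance bound fails to capture bipartiteness and the walk-closure property, so the corollary as written must be corrected rather than proved.
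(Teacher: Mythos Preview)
Your analysis is correct, and in this instance it is the paper's argument, not yours, that contains the gap. On the ``only if'' side the paper proceeds just as you do (it assumes $d\ge 4$ rather than your sharper $d\ge 3$, so it never notices that the quasi-identity actually forces in-component diameter $\le 2$). On the ``if'' side the paper attempts precisely the step you flagged as doomed: from a walk $E(x_1,x_2),E(x_2,x_3),E(x_3,x_4)$ it asserts that ``since the distance between vertices of the same connected component is less or equal than $3$, then there exists an edge between the vertices $x_i$'', i.e.\ an \emph{additional} edge among $x_1,\ldots,x_4$ beyond the three given; it then eliminates $E(x_1,x_3)$ and $E(x_2,x_4)$ by triangle-freeness and concludes $E(x_1,x_4)$. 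The quoted inference is simply unjustified --- the diameter hypothesis is already witnessed by the walk itself and produces no extra edge --- and your counterexamples $C_5$ and $P_4$ hit this exact spot: both are triangle-free with all in-component distances $<4$, yet in each the walk $x_1,x_2,x_3,x_4$ admits no fourth edge, the quasi-identity~(\ref{eq:quasi-identity_N}) fails, and by Theorem~\ref{th:N_criterion} the direct power is not equationally Noetherian.

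Your proposed repair --- that~(\ref{eq:quasi-identity_N}) holds iff every connected component of $\Gcal$ is complete bipartite --- is the right replacement. (For the record, the quasi-identity already forces triangle-freeness: a triangle $a,b,c$ with $x_1=a,x_2=b,x_3=c,x_4=a$ would yield the loop $E(a,a)$. So one may state the corrected corollary without a separate triangle-free clause.)
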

\begin{proof}
First, we prove the ``only if'' part of the statement. By Corollary~\ref{cor:triangle_free} $\Gcal$ is triangular-free. Let us take two vertices $x,y$ with the distance $4\leq d(x,y)=d<\infty$ and the shortest path $x=x_1,x_2,\ldots,x_d=y$ between $x$ and $y$. However, the quasi-identity~(\ref{eq:quasi-identity_N}) provides that $E(x_1,x_4)$, and the minimal path between $x,y$ has the length less than $d$, a contradiction.

Let us prove the ``if'' part of the statement and take arbitrary $x_1,x_2,x_3,x_4\in\Gcal$ such that $E(x_1,x_2)$, $E(x_2,x_3)$, $E(x_3,x_4)$. Since the distance between vertices of the same connected component is less or equal than $3$, then there exists an edge between the vertices $x_i$. If there exists one of the edges $E(x_1,x_3)$, $E(x_2,x_4)$ then $\Gcal$ contains a triangle. Thus, $\Gcal$ has the edge $E(x_1,x_4)$ and the quasi-identity~(\ref{eq:quasi-identity_N}) holds in $\Gcal$.
\end{proof}

Let us give the explicit examples of graphs $\Gcal\in\Kbf$.

One can directly prove that the disjoint union $\Gcal=\Gcal_1\sqcup\Gcal_2$ has an equationally Noetherian direct power $\PG$ if both graphs satisfy the quasi-identity~(\ref{eq:quasi-identity_N}). Thus, there arises a question: is there a {\it connected} graph $\Gcal$ with $n$ vertices such that any direct power $\PG$ is $\LL_g(\PG)$-equationally Noetherian?

The answer is positive. Let us define the following graph $\Gcal$ with the vertex set $\{x_0,x_1,\ldots,x_n,x_{n+1}\}$ and edges $\{E(x_0,x_i),E(x_i,x_{n+1})\mid 1\leq i\leq n\}$. The direct check gives that $\Gcal$ satisfies~(\ref{eq:quasi-identity_N}), contains $n+1$ vertices and $\Gcal$ is connected.

\section{Partial orders}
\label{sec:posets}

A {\it partial order} $\Pcal$ is an algebraic structure of the language $\LL_p=\{\leq^{(2)}\}$ such that $\Pcal$ satisfies the following axioms
\begin{eqnarray*}
\forall x\; (x\leq x),\\
\forall x\forall y\; (x\leq y)\wedge(y\leq x)\to(x=y),\\
\forall x\forall y\; (x\leq y)\wedge(y\leq z)\to(x\leq z).
\end{eqnarray*}

A partial order $\Pcal$ is said to be {\it non-trivial} if there exists a pair $a,b\in\Pcal$ such that $a<b$ (i.e. $a\leq b$ and $a\neq b$).


\begin{theorem}
Let $\mathcal{P}$ be a non-trivial partial order, and $\Pi\mathcal{P}$ be an infinite direct power of $\mathcal{P}$. Then $\Pi\mathcal{P}$ is not $\LL_p(\Pi\Pcal)$-equationally Noetherian.
\end{theorem}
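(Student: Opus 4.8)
The plan is to adapt the ``only if'' direction of Theorem~\ref{th:N_criterion}: I will exhibit one explicit infinite $\LL_p(\Pi\Pcal)$-system $\Ss$ in a single variable $x$ together with points $\mathbf{p}_1,\mathbf{p}_2,\ldots\in\Pi\Pcal$ such that $\mathbf{p}_n$ lies in $\V_{\Pi\Pcal}(\Ss_n)$ but not in $\V_{\Pi\Pcal}(\Ss)$, where $\Ss_n$ is the subsystem formed by the first $n$ equations of $\Ss$. This is enough: every finite subsystem $\Ss^\pr\subseteq\Ss$ sits inside some $\Ss_N$, so $\V_{\Pi\Pcal}(\Ss^\pr)\supseteq\V_{\Pi\Pcal}(\Ss_N)$ contains $\mathbf{p}_N\notin\V_{\Pi\Pcal}(\Ss)$, whence $\Ss^\pr\not\sim\Ss$.

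To build $\Ss$, use that $\Pcal$ is non-trivial to fix $a,b\in\Pcal$ with $a<b$, and use that the power is infinite to fix pairwise distinct coordinates $i_1,i_2,\ldots\in I$. For $n\geq1$ let $\c_n\in\Pi\Pcal$ be the element whose $i_j$-th coordinate equals $b$ for $j<n$ and equals $a$ for $j\geq n$, and which equals $a$ on every coordinate outside $\{i_1,i_2,\ldots\}$. Set $\Ss=\{\,\c_n\leq x\mid n\geq1\,\}$ and take $\mathbf{p}_n:=\c_n$. A direct coordinatewise check (each comparison being one of $b\leq b$, $a\leq a$, $a\leq b$) shows $\c_k\leq\c_n$ for all $k\leq n$, so $\mathbf{p}_n$ satisfies $\c_1\leq x,\ldots,\c_n\leq x$, i.e.\ $\mathbf{p}_n\in\V_{\Pi\Pcal}(\Ss_n)$. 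On the other hand, the equation $\c_{n+1}\leq x$ evaluated at $x=\mathbf{p}_n=\c_n$ requires $\c_{n+1}\leq\c_n$, whose $i_n$-th coordinate reads $b\leq a$; since $a\leq b$ already holds, $b\leq a$ would give $a=b$ by antisymmetry, a contradiction. Hence $\mathbf{p}_n\notin\V_{\Pi\Pcal}(\c_{n+1}\leq x)\supseteq\V_{\Pi\Pcal}(\Ss)$.

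Combining the two observations gives $\Ss_n\not\sim\Ss$ for every $n$, and the reduction of the first paragraph then shows that no finite subsystem of $\Ss$ is equivalent to $\Ss$; so $\Pi\Pcal$ is not $\LL_p(\Pi\Pcal)$-equationally Noetherian. I do not expect a genuine obstacle: once one sees that a single strict comparison $a<b$ already produces the required one-directional growth, the construction is essentially forced, and the only care needed is the coordinate bookkeeping when $I$ is uncountable (handled by fixing a countable subfamily of coordinates and freezing all others at $a$) and the passage from an arbitrary finite subsystem to an initial segment $\Ss_N$, both of which mirror the graph case exactly.
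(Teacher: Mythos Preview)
Your argument is correct and is essentially the same construction as the paper's, carried out in the dual direction: the paper uses the descending chain of upper bounds $x\leq[b,b,\ldots]$, $x\leq[a,b,\ldots]$, $x\leq[a,a,b,\ldots],\ldots$ with witnesses $[a,\ldots,a,b,b,\ldots]$, whereas you use the ascending chain of lower bounds $\c_n\leq x$ with witnesses $\c_n$. Your version is in fact slightly tidier, since you work directly in $\Pi\Pcal$ and handle an arbitrary infinite index set $I$ explicitly, while the paper first passes to the substructure $\Pi\Ecal$ with $\Ecal=\{a,b\}$ and implicitly assumes $I=\mathbb{N}$.
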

\begin{proof}
Since $\Pcal$ is non-trivial, there exists $a,b\in\Pcal$ with $a<b$. It is sufficient to show that an infinite direct power $\Pi \Ecal\subseteq \Pi\Pcal$ of the partial order $\Ecal=\{a,b\}$ is not $\LL_p(\Pi \Ecal)$-equationally Noetherian.

Indeed, one should consider the following infinite $\LL_p(\Pi \Ecal)$-system $\Ss$: 
\begin{eqnarray*}
x\leq[b,b,b,\ldots,]\\
x\leq[a,b,b,\ldots,]\\
x\leq[a,a,b,\ldots,]\\
\ldots
\end{eqnarray*}
Obviously, the unique solution of $\Ss$ is $[a,a,a,\ldots,]$. However the solution set of any finite subsystem of $\Ss$ contain a point $[\underbrace{a,a,a,\ldots,a}_{\mbox{$n$ times}},b,b,b\ldots,]$ for sufficiently large $n$. Thus, $\Ss$ is not equivalent to any finite subsystem.
\end{proof}

\section{Matroids}
\label{sec:matroids}

One can consider a {\it matroid} $\Mcal$ as an algebraic structure of an infinite language $\LL_m=\{P_1^{(1)},P_2^{(2)},P_3^{(3)},\ldots\}$, where each predicate symbol $P_n$ have the following interpretation:
\[
P_n(x_1,\ldots,x_n)\Leftrightarrow \mbox{ the set $\{x_i\}$ is independent in $\Mcal$}.
\]   
Moreover, any matroid satisfies the following axioms:
\begin{eqnarray*}
\forall x_1\ldots\forall x_n\; \left(\bigvee_{i\neq j}(x_i=x_j)\to\neg P_n(x_1,\ldots,x_n) \right)\\
\forall x_1\ldots\forall x_n\; \left(P_n(x_1,\ldots,x_n)\to \bigwedge_{i=1}^n P_{n-1}(x_1,\ldots,x_{i-1},x_{i+1},\ldots,x_n)\right)\; (n>1),\\
\forall x_1\ldots\forall x_n\; \left(P_n(x_1,\ldots,x_n)\wedge P_{n+1}(y_1,\ldots,y_{n+1})\to \bigvee_{i=1}^{n+1} P_{n+1}(x_1,\ldots,x_n,y_i)\right).
\end{eqnarray*}

{\it Notice that a direct power $\Pi \Mcal$ of a matroid $\Mcal$ is not necessarily a monoid itself}. However, here we study direct powers of matroids, since the algebraic geometry over $\Pi\Mcal$ may clarify algebraic and geometric properties of the original matroid $\Mcal$. 

\begin{lemma}
Let $\Mcal$ be a matroid with $P_3(a,b,c)$ for some $a,b,c\in\Mcal$. Then any infinite direct power $\Pi\Mcal$ is not $\LL_m(\Pi\Mcal)$-equationally Noetherian.
\label{l:matroids_with_triples}
\end{lemma}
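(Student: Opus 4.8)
The plan is to mimic the strategy already used for graphs (Theorem~\ref{th:N_criterion}, ``only if'' direction) and for partial orders: if $\Mcal$ contains an independent triple $\{a,b,c\}$, I would exhibit an explicit infinite $\LL_m(\Pi\Mcal)$-system $\Ss$, together with a sequence of points witnessing that no finite subsystem of $\Ss$ is equivalent to $\Ss$. The key is to use the predicate $P_3$ with one fixed variable $x$ and two constant slots filled by sequences that ``switch'' coordinate by coordinate between a dependent configuration and an independent one.

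First I would observe that it suffices to work inside the direct power of the three-element submatroid $\{a,b,c\}$ (more precisely, the restriction of $\Mcal$ to this set, which is itself a matroid by the hereditary axiom for $P_n$), since a non-Noetherian subpower forces $\Pi\Mcal$ to be non-Noetherian as well — the solution sets of the relevant systems are computed coordinate-wise via~(\ref{eq:solution_set_via_projections}). Then I would set up the system
\begin{eqnarray*}
P_3(x,[b,b,b,\ldots],[c,c,c,\ldots]),\\
P_3(x,[a,b,b,\ldots],[a,c,c,\ldots]),\\
P_3(x,[a,a,b,\ldots],[a,a,c,\ldots]),\\
\ldots
\end{eqnarray*}
so that the $n$-th equation is $P_3(x,[\,\underbrace{a,\ldots,a}_{n-1},b,b,\ldots\,],[\,\underbrace{a,\ldots,a}_{n-1},c,c,\ldots\,])$. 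The intended point satisfying the first $n$ equations but not the $(n+1)$-st is $\x_n = [\,\underbrace{a,\ldots}_{?},\ldots\,]$ chosen so that in each of the first $n$ coordinates the triple $\{\pi_i(\x_n),\,a\text{ or }b,\,a\text{ or }c\}$ is independent, while in coordinate $n+1$ it is dependent — for instance one wants $\pi_i(\x_n)$ to be a third distinct element of $\{a,b,c\}$ in the ``early'' coordinates and to coincide with one of the constant entries (making $P_3$ fail by the first matroid axiom on repeated arguments) in coordinate $n+1$. I expect the right choice to be $\x_n = [\,\underbrace{c,\ldots,c}_{n-1},a,a,\ldots\,]$ or a similar variant; the coordinate-wise check then reduces to: $\{c,b,c\}$ is dependent (repeated $c$) — so this particular choice needs adjustment, and the bulk of the write-up is pinning down constants that genuinely give independence in the early coordinates.

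The main obstacle, therefore, is purely combinatorial bookkeeping: I must choose the two constant sequences and the witnessing points so that in the ``active'' coordinate the argument triple of $P_3$ is a genuine permutation of $\{a,b,c\}$ (hence independent, since $\{a,b,c\}$ is independent and independence is symmetric and unaffected by order), while in the ``switched'' coordinate some argument is repeated or the triple degenerates so that $P_3$ fails. Concretely I would take the $n$-th equation to be $P_3(x,\,[\,\underbrace{a,\ldots,a}_{n-1},\,c,c,\ldots\,],\,[\,\underbrace{b,\ldots,b}_{n-1},\,c,c,\ldots\,])$ and the witness $\x_n=[\,\underbrace{b,\ldots,b}_{n-1}\!\!\!\!,\,$ wait — again a collision; the cleanest fix is to keep the first constant slot equal to the constant $a$ in all coordinates and vary only the second slot, using $P_3(x,[a,a,\ldots],[b,b,\ldots,b,c,c,\ldots])$ with $n-1$ copies of $b$, and witness $\x_n=[c,c,\ldots,c,b,b,\ldots]$ with $n-1$ copies of $c$: then coordinates $1,\ldots,n-1$ give the triple $\{c,a,b\}$ (independent) and coordinate $n$ gives $\{b,a,b\}$ (dependent, repeated $b$), so $\x_n$ satisfies the first $n$ equations but not the $(n+1)$-st. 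Once the constants are fixed correctly, verifying that $[a,a,\ldots]$ (or the actual unique/limit solution) behaves as claimed and that each finite subsystem is strictly weaker is routine via~(\ref{eq:solution_set_via_projections}), exactly as in the graph and poset cases. The only subtlety to double-check is that the matroid axioms do not force any unexpected independence relations among sequences in $\Pi\Mcal$ — but since all predicates are interpreted coordinate-wise, no such collapse occurs.
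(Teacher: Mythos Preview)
Your final concrete system does not work. Take your $n$-th equation $P_3(x,[a,a,\ldots],[\underbrace{b,\ldots,b}_{n-1},c,c,\ldots])$ and your witness $\x_n=[\underbrace{c,\ldots,c}_{n-1},b,b,\ldots]$. You only checked $\x_n$ against the $n$-th and $(n+1)$-st equations, but you must check it against \emph{all} of the first $n$. For the very first equation the second constant is $[c,c,\ldots]$, so at coordinate~$1$ the triple is $(c,a,c)$, which is dependent by the first matroid axiom; hence $\x_n$ already fails equation~$1$. Worse, over the three-element submatroid $\{a,b,c\}$ to which you reduced, the projection $P_3(x_i,a,c)$ forces $x_i=b$ and $P_3(x_i,a,b)$ forces $x_i=c$, so your first two equations are already jointly inconsistent at coordinate~$1$. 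An inconsistent system is trivially equivalent to a finite subsystem, so this $\Ss$ cannot witness failure of the Noetherian property. The underlying reason is structural: with $P_3$ and two fixed constant slots, each projection pins $x_i$ to a \emph{single} value, leaving no room for the ``shrinking solution set'' phenomenon you need.

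The paper avoids this by dropping to $P_2$. The hereditary axiom gives $P_2(a,b)$, $P_2(b,c)$, $P_2(a,c)$ from $P_3(a,b,c)$, so $\{a,b,c\}$ behaves exactly like a triangle in a graph, and one runs the same construction as in the ``only if'' direction of Theorem~\ref{th:N_criterion}: the system is $P_2(x,[a,a,\ldots])$, $P_2(x,[b,a,a,\ldots])$, $P_2(x,[b,b,a,\ldots])$, \dots, with witness $[\underbrace{c,\ldots,c}_{n},b,b,\ldots]$ for the first $n$ equations. Here each projection $P_2(x_i,a)$ or $P_2(x_i,b)$ leaves \emph{two} admissible values for $x_i$, which is exactly the slack needed. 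Your instinct to mimic the graph argument was right; the fix is to use the binary predicate rather than the ternary one.
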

\begin{proof}
Let us consider a system $\Ss$ of $\LL_m(\Pi \Mcal)$-equations
\begin{eqnarray*}
P_2(x,[a,a,a,\ldots]),\\
P_2(x,[b,a,a,\ldots]),\\
P_2(x,[b,b,a,\ldots]),\\
\ldots
\end{eqnarray*}
Denote by $\Ss_n$ the first $n$ equations of $\Ss$. Clearly, $\Ss_n$ is satisfied by the point
\[
[\underbrace{c,c,\ldots,c}_{\mbox{ $n$ times}},b,b,\ldots,].
\]
However this point does not belong to the solution set of $\Ss$, since the predicate 
\[
P_2([\underbrace{c,c,\ldots,c}_{\mbox{ $n$ times}},b,b,\ldots,],[\underbrace{b,b,\ldots,b}_{\mbox{$n+1$ times}},a,a,\ldots,])
\]
is not true for the $(n+1)$-th coordinate. 
\end{proof}

According to Lemma~\ref{l:matroids_with_triples}, any matroid $\Mcal$ with $\Pi\Mcal\in\Nbf$ may be represented by a graph $\Gcal(\Mcal)$ such that
\begin{enumerate}
\item the vertex set of $\Gcal$ coincides with the set $\Mcal$;
\item $P_2(a,b)\Leftrightarrow E(a,b)$. 
\end{enumerate}

Hence, such matroids may be classified by the analogue of Theorem~\ref{th:N_criterion}. 

\begin{theorem}
A direct power $\Pi\Mcal$ of a matroid $\Mcal$ is $\LL_m(\Mcal)$-equationally Noetherian iff $\Mcal$ satisfies the following axioms
\begin{eqnarray*}
\forall x\forall y\forall z\; \neg P_3(x,y,z),\\
\forall x_1\forall x_2\forall x_3\forall x_4\:\left(P_2(x_1,x_2)\wedge P_2(x_2,x_3)\wedge P_2(x_3,x_4)\to P_2(x_4,x_1) \right).
\end{eqnarray*}
\end{theorem}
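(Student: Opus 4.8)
The plan is to combine Lemma~\ref{l:matroids_with_triples} with the graph-theoretic machinery of Theorem~\ref{th:N_criterion}, treating the two displayed axioms separately as the "independence part" and the "edge part" of the matroid structure.

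For the \emph{only if} direction, suppose $\Pi\Mcal$ is $\LL_m(\Mcal)$-equationally Noetherian. If $\Mcal$ had a $3$-element independent set, i.e. $P_3(a,b,c)$ for some $a,b,c\in\Mcal$, then Lemma~\ref{l:matroids_with_triples} immediately gives a system with no equivalent finite subsystem, a contradiction; hence the first axiom $\forall x\forall y\forall z\;\neg P_3(x,y,z)$ holds. Given that, the relation $P_2$ behaves exactly like a graph edge relation (it is anti-reflexive by the first matroid axiom, and symmetric because for $a\neq b$ the set $\{a,b\}=\{b,a\}$), so $\Mcal$ is essentially the graph $\Gcal(\Mcal)$ described after Lemma~\ref{l:matroids_with_triples}. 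Moreover, once $P_3$ is identically false, every $\LL_m(\Pi\Mcal)$-equation in the variables $X$ is equivalent over $\Pi\Mcal$ either to a $P_2$-equation or to a trivially handled equation (an equality, or a $P_1$-equation which the matroid axioms force to be either always true or always reducible), so the whole theory of equations over $\Pi\Mcal$ reduces to the theory of equations over $\Pi\Gcal(\Mcal)$. Therefore Theorem~\ref{th:N_criterion} applies and yields the quasi-identity, which is exactly the second displayed axiom written in terms of $P_2$.

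For the \emph{if} direction, assume both axioms hold. The first axiom says the only independent sets are of size $\leq 2$, so for $n\geq 3$ the predicate $P_n$ is identically false on $\Mcal$ and hence also identically false on $\Pi\Mcal$ (false in every coordinate). Consequently any $\LL_m(\Pi\Mcal)$-equation $P_n(\ldots)$ with $n\geq 3$ is equivalent over $\Pi\Mcal$ to, say, the single inconsistent equation $P_3(x,x,x)$, and equations $P_1(t)$ are, by the matroid axioms, either always true (so removable) or reducible to an equality. Thus any $\LL_m(\Pi\Mcal)$-system $\Ss$ is equivalent to a system $\Ss'$ built only from $P_2$-equations, equalities $x_i=x_j$, constant-assignments $x_i=\c$, and possibly one fixed inconsistent equation; this $\Ss'$ is precisely an $\LL_g(\Pi\Gcal(\Mcal))$-system (with an adjoined inconsistency flag). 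By the second axiom, $\Gcal(\Mcal)$ satisfies the quasi-identity~(\ref{eq:quasi-identity_N}), so Theorem~\ref{th:N_criterion} shows $\Pi\Gcal(\Mcal)$ is $\LL_g(\Pi\Gcal(\Mcal))$-equationally Noetherian; hence $\Ss'$ — and therefore $\Ss$ — is equivalent to a finite subsystem. (When $\Ss$ is inconsistent one can instead invoke Lemma~\ref{l:inconsistent_systems}, noting that a finite matroid, being finite, is $\LL_m$-equationally Noetherian.)

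The main obstacle I expect is the bookkeeping in the reduction step: one must argue carefully that projecting an $\LL_m(\Pi\Mcal)$-equation coordinatewise and using that $P_n\equiv\mathrm{false}$ for $n\geq 3$ really does let you discard or replace every higher-arity equation \emph{uniformly} (not coordinate by coordinate), and that the $P_1$-equations cause no trouble — this requires unwinding the matroid axioms to see that $P_1(a)$ holds unless $a$ is a loop, which in the presence of the axioms can be recast as an equality condition. Once this reduction is in place, the result is a clean corollary of Theorem~\ref{th:N_criterion} together with Lemma~\ref{l:matroids_with_triples}, and no genuinely new combinatorics is needed.
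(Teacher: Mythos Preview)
Your proposal is correct and follows essentially the same approach as the paper, whose proof consists of the single sentence that the result follows from Lemma~\ref{l:matroids_with_triples}, Theorem~\ref{th:N_criterion}, and the correspondence $\Mcal\leftrightarrow\Gcal(\Mcal)$. You are supplying exactly the bookkeeping the paper leaves implicit; the only cosmetic point is that instead of ``replacing'' a $P_n$-equation ($n\geq 3$) by a fixed inconsistent one, it is cleaner to note that any such equation already makes $\Ss$ inconsistent and hence equivalent to that single-equation subsystem, so that in the remaining case $\Ss$ literally \emph{is} an $\LL_g(\Pi\Gcal(\Mcal))$-system plus finitely many $P_1$- and equality-equations.
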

\begin{proof}
The proof immediately follows from Lemma~\ref{l:matroids_with_triples}, Theorem~\ref{th:N_criterion} and the correspondence $\Mcal\leftrightarrow\Gcal(\Mcal)$. 
\end{proof}

\section{Direct powers of finite structures}
\label{sec:general_results}

Let us prove a general fact about direct powers of arbitrary finite algebraic structures. The proof of the following theorem is complicated enough, so its main steps are explained in Example~\ref{ex:ex}.

\begin{theorem}
Let $\A$ be a finite $\LL$-structure. Then any direct power $\Pi \A=\Pi_{i\in I}\A$ is weakly $\LL(\Pi \A)$-equationally Noetherian.
\label{th:direct_product_finite}
\end{theorem}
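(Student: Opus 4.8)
The plan is to exploit the representation of the solution set of an $\LL(\Pi\A)$-system $\Ss$ through its coordinate projections, equation~(\ref{eq:solution_set_via_projections}), together with the finiteness of $\A$. Since $\A$ is finite, for a fixed finite variable set $X=\{x_1,\ldots,x_n\}$ there are only finitely many subsets of $\A^n$, hence only finitely many possible solution sets $\V_\A(T)$ of $\LL(\A)$-systems in $X$. Moreover, since $\A$ is finite it is automatically $\LL$-equationally Noetherian, and in fact each algebraic set $Y\subseteq\A^n$ is cut out by a \emph{single} $\LL(\A)$-equation or at worst a finite $\LL(\A)$-system (finitely many atomic formulas with parameters from the finite set $\A$ suffice). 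The key observation is that the whole system $\Ss$ is determined, up to equivalence over $\Pi\A$, by the \emph{family of solution sets} $\{\,\V_\A(\pi_i(\Ss))\mid i\in I\,\}$ of its projections: by~(\ref{eq:solution_set_via_projections}), $\V_{\Pi\A}(\Ss)$ depends only on which subsets of $\A^n$ occur as $\V_\A(\pi_i(\Ss))$ and at which coordinates.

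The main steps I would carry out are as follows. First, enumerate the finitely many algebraic subsets $Y_1,\ldots,Y_m$ of $\A^n$; for each $Y_t$ fix a finite $\LL(\A)$-system $T_t$ with $\V_\A(T_t)=Y_t$. Second, partition the index set $I$ according to the value of the projection: let $I_t=\{\,i\in I\mid \V_\A(\pi_i(\Ss))=Y_t\,\}$, so that $I=I_1\sqcup\cdots\sqcup I_m$. Third, for each coordinate $i\in I_t$ and each equation $E\in T_t$, ``lift'' $E$ to an $\LL(\Pi\A)$-equation $\widehat{E}_i$ whose only nonzero coordinate data sits in position $i$: concretely, take the atomic formula of the same relation symbol, put the variables of $X$ where $E$ has them, and for the constant array choose the element of $\Pi\A$ that equals the prescribed parameter of $E$ in coordinate $i$ and is some fixed default element elsewhere. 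One must check that, over $\Pi\A$, the conjunction over all $i\in I_t$ and all $E\in T_t$ of these lifted equations has solution set exactly $\{[P_i\mid i\in I]\mid P_i\in Y_t \text{ for } i\in I_t,\ P_i\in\A^n \text{ otherwise}\}$ — this is where one uses that projections decouple coordinatewise. Taking the union of these lifted families over $t=1,\ldots,m$ yields a new $\LL(\Pi\A)$-system $\Ss'$ with $\V_{\Pi\A}(\Ss')=\V_{\Pi\A}(\Ss)$, by~(\ref{eq:solution_set_via_projections}).

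The remaining point is that $\Ss'$ can be taken \emph{finite}. As stated it is still infinite (one lifted equation per coordinate). Here I would use a second round of the same idea: for fixed $t$ and fixed $E\in T_t$, the family $\{\widehat{E}_i\mid i\in I_t\}$ of lifted equations, over the direct power, imposes the constraint ``for every $i\in I_t$, the coordinate tuple lies in $\V_\A(E)$''; but this single constraint is itself describable by one $\LL(\Pi\A)$-equation if we are willing to use a constant array that encodes the relevant parameter simultaneously in all coordinates of $I_t$ and a default elsewhere — indeed the projection of such a single equation onto coordinate $i$ is exactly $E$ for $i\in I_t$ and a trivially-true equation (e.g. $x_1=x_1$) for $i\notin I_t$. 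Thus each of the finitely many pairs $(t,E)$ with $t\le m$, $E\in T_t$ contributes a single $\LL(\Pi\A)$-equation, and the resulting finite system $\Ss'$ is equivalent to $\Ss$ over $\Pi\A$ by~(\ref{eq:solution_set_via_projections}).

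The step I expect to be the main obstacle is the bookkeeping that turns the ``one equation per coordinate'' family into genuinely \emph{finitely many} equations: one must arrange that a single $\LL(\Pi\A)$-equation can simultaneously encode a given $\LL(\A)$-equation $E$ in all coordinates of a set $I_t\subseteq I$ and be vacuous off $I_t$, and that the projection operation behaves as claimed — in particular that $\pi_i(x_1=x_1)$ is again $x_1=x_1$, so that coordinates outside $I_t$ really impose no constraint. A subtlety is that if the language $\LL$ has no relation for which a ``trivially true'' atomic instance exists other than equations, one should check $x=x$ is available (it is, as an $\LL$-equation in the sense of the paper). Handling the case where some projection $\pi_i(\Ss)$ is inconsistent is immediate from Lemma~\ref{l:inconsistent_systems}: then $\Ss$ itself is equivalent to a finite subsystem, so one may assume all projections are consistent. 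With those points nailed down, the theorem follows; the worked Example~\ref{ex:ex} should illustrate exactly this encoding.
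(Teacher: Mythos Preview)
Your overall strategy is the right one and matches the paper's in spirit: reduce to the finitely many possible projected systems over $\A$, and then wrap them back into finitely many $\LL(\Pi\A)$-equations. The gap is precisely at the step you flagged as the main obstacle, and your proposed resolution of it does not work.

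An $\LL(\Pi\A)$-equation is a \emph{single} atomic formula; under $\pi_i$ only the constants change, never the relation symbol or the pattern of which argument places carry variables versus constants. Concretely, if $E$ is $R(x,c)$ then any lift has the form $R(x,\c)$ for some $\c\in\Pi\A$, and $\pi_i$ of it is $R(x,\pi_i(\c))$ for \emph{every} $i$ --- it is never $x_1=x_1$. So there is no choice of ``default element'' that makes the projection vacuous off $I_t$: for a generic relation symbol $R$ there need not exist any $d\in\A$ with $\V_\A(R(x,d))=\A^n$, and for the equality symbol this would require $|\A|=1$. Hence your single lifted equation will, at coordinates $i\notin I_t$, impose a genuine extra constraint $R(x,d)$ that may shrink $\V_\A(\pi_i(\Ss))$, and the resulting $\Ss'$ is in general \emph{not} equivalent to $\Ss$.

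The paper's fix is exactly this missing idea: do not try to make the off-$I_t$ projections vacuous, make them \emph{redundant}. First put into $\Ss'$ a finite set $\Ss_0\subseteq\Ss$ of original equations whose projections already realise every equivalence class in your set $M$. Then, for the $s$-th equation $E_j(X,\pi_i(\overrightarrow{\mathbf{C_j}}))\in M$, build $D_s$ of the \emph{same shape} as $E_j$, taking the constant to be $\pi_i(\overrightarrow{\mathbf{C_j}})$ at the ``on'' coordinates and $\pi_l(\overrightarrow{\mathbf{C_j}})$ (the $l$-th projection of the very equation $E_j\in\Ss_0$) at each ``off'' coordinate $l$. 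At an off coordinate, $\pi_l(D_s)=\pi_l(E_j)\in\pi_l(\Ss_0)\subseteq\pi_l(\Ss)$, so it adds nothing new; at an on coordinate it contributes the desired member of $M$. This is why the paper works with equations drawn from the projections of $\Ss$ itself rather than with arbitrary defining systems $T_t$ for the algebraic sets $Y_t$: it guarantees that the filler at off coordinates has the right shape and is already a consequence of $\Ss$.
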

\begin{proof}

Let $\Ss=\{E_j(X,\overrightarrow{\mathbf{C_j}})\mid j\in J\}$ be an infinite $\LL(\Pi\A)$-system over $\Pi\A$, and $\pi_i(\Ss)=\{E_j(X,\pi_i(\overrightarrow{\mathbf{C_j}}))\mid j\in J\}$ ($i\in I$) be the projections of $\Ss$ onto all coordinates of $\Pi\A$. Notice that any system $\pi_i(\Ss)$ is a system of $\LL(\A)$-equations over $\A$.

Since $\A$ is finite,  then  there exists a finite number of equations $M=\{E_j(X,\pi_i(\overrightarrow{\mathbf{C_j}}))\mid (i,j)\in K\}$ ($|K|<\infty$) such that any $E_j(X,\pi_i(\overrightarrow{\mathbf{C_j}}))\in \bigcup_{i\in I}\pi_i(\Ss)$ is equivalent over $\A$ to an appropriate equation from $M$. Hence, each $\pi_i(\Ss)$ is equivalent to a subsystem $\Ss_i^\pr\subseteq M$ over $\A$. The idea of the further proof is the following: we try to wrap all systems $\Ss_i^\pr$ into a finite number of equations $\Ss^\pr$ over $\Pi\A$. 

Let us define an $\LL(\Pi \A)$-system $\Ss^\pr$ by the following procedure.

{\bf Step 0.} Put 
\[
\Ss_0=\bigcup_{(i,j)\in K} E_j(X,\overrightarrow{\mathbf{C_j}})\subseteq \Ss
\]
($|\Ss_0|=|K|$) and $\Ss^\pr:=\Ss_0$. The main property of $\Ss_0$ is the following: each equation from $M$ occurs in some projection of equations from $\Ss_0$. Let us arbitrarily enumerate equations in the set $M$, i.e. each equations from $M$ has the number $s\in[1,|K|]$.

{\bf Step $s$} ($1\leq s\leq |K|$). Let us take the $s$-th equation $E_j(X,\pi_i(\overrightarrow{\mathbf{C_j}}))$ from $M$ and define the following sets of indexes $I_0=\{l\in I\mid E_j(X,\pi_i(\overrightarrow{\mathbf{C_j}}))\in\Ss_l^\pr\}$, $I_1=I\setminus I_0$. In other words, $I_0$ is the set of all indexes $l$ such that the given equation from $M$ occurs in the system $\Ss^\pr_l$. Define a set $M_{s}=\{D_l(X)\mid l\in I\}$ of $\LL(\A)$-equations as follows:
\[
D_l(X)=\begin{cases}
E_j(X,\pi_i(\overrightarrow{\mathbf{C_j}}))\mbox{ if }l\in I_0,\\
E_j(X,\pi_l(\overrightarrow{\mathbf{C_j}}))\mbox{ if }l\in I_1
\end{cases}
\]
The sense of the set $M_{s}$ is the following. If the system $\Ss^\pr_l$ contains $E_j(X,\pi_i(\overrightarrow{\mathbf{C_j}}))\in M$ we take this equation as the $k$-th projection in $M_{s}$. Otherwise, the $l$-th projection in $M_{s}$ is taken from the equation $E_j(X,\overrightarrow{\mathbf{C_j}})\in\Ss_0$.

The $\LL(A)$-equations $M_{s}$ may be wrapped into the $\LL(\Pi\A)$-equation $D_{s}(X,\overrightarrow{\mathbf{D_{s}}})$, where 
\[
\pi_l(\overrightarrow{\mathbf{D_{s}}})=\begin{cases}
	\pi_i(\overrightarrow{\mathbf{C_j}})\mbox{ if }l\in I_0,\\
\pi_l(\overrightarrow{\mathbf{C_j}})\mbox{ if }l\in I_1
	
	\end{cases}
\]

We put $\Ss^\pr:=\Ss^\pr\cup D_{s}(X,\overrightarrow{\mathbf{D_{s}}})$ and go to the following step $(s+1)$.

\medskip

By the definition of the system $\Ss^\pr$, the $i$-th projection $\pi_i(\Ss^\pr)$ contain all equations from $\Ss_i^\pr\sim \pi_i(\Ss)$. Hence, $\pi_i(\Ss^\pr)\sim \pi_i(\Ss)$ over $\A$, and finally $\Ss^\pr\sim \Ss$ over $\Pi\A$.

%
%
%

\end{proof}

The following example explains the technique and denotations from Theorem~\ref{th:direct_product_finite}.

\begin{example}
\label{ex:ex}
Let $\Gcal$ be the graph with vertices $\{a,b,c\}$ and edges $E(a,b)$, $E(b,c)$, $E(c,a)$ (i.e. $\Gcal$ is a complete graph). Let us consider an infinite $\LL(\Pi\Gcal)$-system $\Ss$ of equations:
\begin{eqnarray*}
E(x,[a,a,a,a,a,a,\ldots]),\\
E(x,[b,a,a,a,a,a,\ldots]),\\
E(x,[b,c,a,a,a,a,\ldots]),\\
E(x,[b,c,b,a,a,a,\ldots]),\\
E(x,[b,c,b,c,a,a,\ldots]),\\
\ldots
\end{eqnarray*}
The projections $\pi_i(S)$ are the following (we omit in the projections equations which occur earlier):
\begin{eqnarray*}
\pi_1(\Ss)=\{E(x,a),E(x,b)\},\\
\pi_2(\Ss)=\{E(x,a),E(x,c)\},\\
\pi_3(\Ss)=\{E(x,a),E(x,b)\},\\
\pi_4(\Ss)=\{E(x,a),E(x,c)\},\\
\ldots
\end{eqnarray*}

The set $M$ consists of the equations $E(x,a)$, $E(x,b)$, $E(x,c)$ (any equation from $\bigcup_{i=1}^n\pi_i(\Ss)$  is equivalent to one of the given equations). Since the third equation of $\Ss$ contain all equations from $M$ as projections, we may put $\Ss_0=\{E(x,[b,c,a,a,a,a,\ldots])\}$ (the set $K$ here is $\{(1,3),(2,3),(3,3)\}$). For the projections $\pi_i(\Ss)$ we have
\begin{eqnarray*}
\pi_{2k+1}(\Ss)&\sim&\{E(x,a),E(x,b)\}=\Ss_{2k+1}^\pr,\\
\pi_{2k}(\Ss)&\sim&\{E(x,a),E(x,c)\}=\Ss_{2k}^\pr.
\end{eqnarray*}

Now we construct the final system $\Ss^\pr$ with $|\Ss_0|+|M|=4$ equations. First, we put $\Ss^\pr=\Ss_0$ and make the following three steps.

\begin{enumerate}
\item We take $E(x,a)\in M$. Since this equation occurs in any system $\Ss_i^\pr$ ($I_0=\N$, $I_1=\emptyset$), we add to $\Ss^\pr$ the equation $E(x,[a,a,a,a,a,\ldots])$.
\item Take $E(x,b)\in M$. Since $E(x,b)$ occurs in the systems $\Ss_i$ with odd $i$ ($I_0=\{1,3,\ldots\}$, $I_1=\{2,4,\ldots\}$), we should add to $\Ss^\pr$ an equation of the form $E(x,[b,\ast,b,\ast,b,\ast,\ldots])$. The elements for even positions are taken from the equation from $\Ss_0$, and we obtain the equation $E(x,[b,c,b,a,b,a,\ldots])$. The last equation is added to $\Ss^\pr$.
\item For the equation $E(x,c)\in M$ we make dual operations. Since $E(x,c)$ occurs in the systems $\Ss_i$ with even $i$ ($I_0=\{2,4,\ldots\}$, $I_1=\{1,3,\ldots\}$) then we should add to $\Ss^\pr$ an equation of the form $E(x,[\ast,c,\ast,c,\ast,c,\ldots])$. The elements for odd positions are taken from the equation from $\Ss_0$, and we obtain the equation $E(x,[b,c,a,c,a,c,\ldots])$. Also we add the last equation to $\Ss^\pr$.
\end{enumerate} 
Thus, the final system $\Ss^\pr$ consists of the following equations
\begin{eqnarray*}
E(x,[b,c,a,a,a,a,\ldots]),\\
E(x,[a,a,a,a,a,a,\ldots]),\\
E(x,[b,c,b,a,b,a,\ldots]),\\
E(x,[b,c,a,c,a,c,\ldots]).
\end{eqnarray*}
It is easy to see that all projections $\pi_i(\Ss^\pr)$ are equivalent over $\Gcal$ to the systems $\Ss_i^\pr$. Thus, $\Ss^\pr$ is equivalent to $\Ss$.
\end{example}

\medskip

The ideas of Theorem~\ref{th:direct_product_finite} allow us to estimate uniformly the minimal number of equations in the finite system $\Ss^\pr$.

\begin{corollary}
Let $\Ss$ be a system of $\LL(\Pi\A)$-equations in $n$ variables over the direct power $\Pi\A$ of a finite $\LL$-structure $\A$, $|\A|=k$. Then $\Ss$ is equivalent to a system $\Ss^\pr$ with at most $2^{k^n+1}$ equations.
\end{corollary}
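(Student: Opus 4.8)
I would derive this bound by re-reading the construction of $\Ss^\pr$ in the proof of Theorem~\ref{th:direct_product_finite} and counting, after choosing the auxiliary set $M$ of $\LL(\A)$-equations as small as possible.

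The crucial observation is that there are only finitely many $\LL(\A)$-equations in the $n$ variables $X=\{x_1,\ldots,x_n\}$ up to equivalence over $\A$. Indeed, each such equation $E$ has a solution set $\V_\A(E)\subseteq\A^n$, two equations are equivalent over $\A$ iff they have the same solution set, and $|\A^n|=k^n$, so there are at most $2^{k^n}$ subsets of $\A^n$ and hence at most $2^{k^n}$ equivalence classes. Therefore, when building $M$ in the proof of Theorem~\ref{th:direct_product_finite}, we may choose exactly one representative equation for each equivalence class occurring among the projected equations $\bigcup_{i\in I}\pi_i(\Ss)$; this gives $|M|\le 2^{k^n}$ (and we take the index set $K$ to biject with $M$).

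With $M$ chosen this way, I would track the size of $\Ss^\pr$ step by step. Step~$0$ sets $\Ss^\pr:=\Ss_0$, and $\Ss_0$ is obtained by picking, for each equation of $M$, a single equation of $\Ss$ that has it as a projection and then taking the resulting set of equations; hence $|\Ss_0|\le|M|$. Each of the subsequent steps processes one equation of $M$ and appends one new $\LL(\Pi\A)$-equation $D_s(X,\overrightarrow{\mathbf{D_s}})$ to $\Ss^\pr$, so these steps add at most $|M|$ equations in total. Consequently $|\Ss^\pr|\le|\Ss_0|+|M|\le 2|M|\le 2\cdot 2^{k^n}=2^{k^n+1}$, and $\Ss^\pr\sim\Ss$ over $\Pi\A$ by Theorem~\ref{th:direct_product_finite}.

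There is no real obstacle in this argument: it is pure bookkeeping on top of Theorem~\ref{th:direct_product_finite}. The only point needing attention is that the index set $K$ used in the original construction is a priori unbounded, so one must explicitly replace it by a minimal set of representatives of the equivalence classes of projected equations; it is precisely the finiteness of $\A^n$ that caps the number of such classes at $2^{k^n}$, and the factor contributing $+1$ to the exponent comes from adjoining $\Ss_0$.
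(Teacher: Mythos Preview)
Your argument is correct and matches the paper's own proof essentially line for line: bound the number of equivalence classes of $\LL(\A)$-equations in $n$ variables by the number $2^{k^n}$ of subsets of $\A^n$, choose $M$ to consist of pairwise non-equivalent representatives so that $|M|\le 2^{k^n}$, and then count $|\Ss^\pr|\le|\Ss_0|+|M|\le 2|M|\le 2^{k^n+1}$. The only cosmetic difference is that you spell out explicitly that $M$ must be taken as a set of representatives of equivalence classes, which the paper assumes implicitly when it asserts that $M$ consists of pairwise non-equivalent equations.
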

\begin{proof}
Since we deal with equations in $n$ variables, all algebraic sets over $\A$ are the subsets of the affine space $\A^n$, $|\A^n|=k^n$. Hence, there exists at most $2^{k^n}$ different algebraic sets over $\A$. Since the set $M$ in Theorem~\ref{th:direct_product_finite} consists of pairwise non-equivalent equations, we have $|M|\leq 2^{k^n}$.

The final system $\Ss^\pr$ consists of at most $|M|+|M|=2|M|$ equations ($|\Ss_0|=|M|$, and $|M|$ iterations of the procedure add to $\Ss^\pr$ exactly $|M|$ equations). Thus, we obtain $|\Ss^\pr|\leq 2\cdot 2^{k^n}=2^{k^n+1}$.
\end{proof}

\bigskip

The information of the author:

Artem N. Shevlyakov

Sobolev Institute of Mathematics

644099 Russia, Omsk, Pevtsova st. 13

\medskip

Omsk State Technical University

pr. Mira, 11, 644050

Phone: +7-3812-23-25-51.

e-mail: \texttt{a\_shevl@mail.ru}


\begin{thebibliography}{1}

\bibitem{DMR1}
E. Daniyarova, A. Myasnikov, V. Remeslennikov, { Unification theorems in algebraic geometry }. Algebra and Discrete Mathamatics, 1 (2008), 80--112.

\bibitem{DMR2}
E. Daniyarova, A. Myasnikov, V. Remeslennikov, { Algebraic geometry over algebraic structures, II: Fundations}. J. Math. Sci., 185:3 (2012), 389--416.

\bibitem{DMR3}
E. Daniyarova, A. Myasnikov, V. Remeslennikov, { Algebraic geometry over algebraic structures, III: Equationally noetherian property and
compactness}. South. Asian Bull. Math., 35:1 (2011), 35--68.

\bibitem{shevl_shah}
A. Shevlyakov, M. Shahryary, Direct products, varieties, and compactness conditions, Groups Complexity Cryptology, 9:2 (2017), 159--166.

\bibitem{shevl_boolean}
A. Shevlyakov, Algebraic geometry over Boolean algebras in the language with constants, Fundam. Prikl. Mat., 18:4 (2013), 197--218; J. Math. Sci., 206:6 (2015), 742--757

\bibitem{shevl_free_semilattice}
A. Shevlyakov, Elements of algebraic geometry over a free semilattice, Algebra Logika, 54:3 (2015), 399--420; Algebra and Logic, 54:3 (2015), 258--271.

\bibitem{shevl_random_equations_over_semi}
A. Shevlyakov, Equivalent equations in semilattices, Sib. Elektron. Mat. Izv., 13 (2016), 478--490.

\bibitem{shevl_at_service}
A. Shevlyakov, { Commutative idempotent semigroups at the
service of the universal algebraic geometry}, Southeast Asian
Bulletin of Mathematics, 35, (2011), 111-136.

\end{thebibliography}
\end{document}